\newtheorem{example}{Problem}[section]
\begin{document}


\title{A short report on the preconditioned Anderson acceleration}


\author[1,3]{Kewang Chen* }

\author[2,3]{Ye Ji}

\author[3]{Matthias M\"oller}

\author[3]{Cornelis Vuik}

\authormark{K. Chen Y. Ji, M. M\"oller and C. Vuik}

\address[1]{\orgdiv{College of Mathematics and Statistics}, \orgname{Nanjing University of Information Science and Technology}, \orgaddress{\state{Nanjing, 210044}, \country{China}}}

\address[2]{\orgdiv{School of Mathematical Sciences}, \orgname{Dalian University of Technology}, \orgaddress{\state{Dalian, 116024}, \country{China}}}

\address[3]{\orgdiv{Delft Institute of Applied Mathematics}, \orgname{Delft University of Technology}, \orgaddress{\state{Delft, 2628CD}, \country{the Netherlands}}}




\corres{*Corresponding author.\\ \email{kwchen@nuist.edu.cn}}

\abstract[Summary]{
In this report, we present a versatile and efficient preconditioned Anderson acceleration (PAA) method for fixed-point iterations. The proposed framework offers flexibility in balancing convergence rates (linear, super-linear, or quadratic) and computational costs related to the Jacobian matrix. Our approach recovers various fixed-point iteration techniques, including Picard, Newton, and quasi-Newton iterations. The PAA method can be interpreted as employing Anderson acceleration (AA) as its own preconditioner or as an accelerator for quasi-Newton methods when their convergence is insufficient. Adaptable to a wide range of problems with differing degrees of nonlinearity and complexity, the method achieves improved convergence rates and robustness by incorporating suitable preconditioners. We test multiple preconditioning strategies on various problems and investigate a delayed update strategy for preconditioners to further reduce the computational costs.} 
\keywords{Anderson acceleration, preconditioning, fixed-point iteration, nonlinear system}



\maketitle
\section{Introduction}
\label{sec:1}
In 1962, Anderson \cite{anderson1965iterative, anderson2019comments} developed an extrapolation algorithm to accelerate the convergence of nonlinear fixed-point iterations. Since then, this has achieved remarkable success and widespread use in various fields, such as computational chemistry (known as Pulay mixing) and electronic structure computations (referred to as Anderson mixing). Within the applied mathematics community, this technique is now recognized as Anderson acceleration (AA). Unlike Picard iterations, which utilize only one previous iterate, the Anderson Acceleration method $AA(m)$ linearly combines a set of $m$ previous iterates to approximate the minimization of the linearized fixed-point residual. Anderson acceleration methods are regarded as ``essentially equivalent" to nonlinear GMRES methods \cite{carlson1998design, oosterlee2000krylov, walker2011anderson} and the direct inversion in the iterative subspace method \cite{pulay1980convergence, lin2013elliptic}. They also share similarities with methods based on quasi-Newton updating \cite{fang2009two, haelterman2010similarities}. Recently, some authors also investigated the non-stationary AA with dynamic window sizes and damping factors. For example, Evans et al. \cite{evans2020proof} proposed a heuristic strategy for selecting damping factors based on the gain at each iteration, while Pollock and Rebholz \cite{pollock2021anderson} developed a strategy to dynamically alternate window sizes. More recently, inspired by the hybrid linear solver GMRES Recursive (GMRESR) \cite{vuik1993solution, van1994gmresr}, Chen and Vuik \cite{chen2022composite} introduced a composite Anderson acceleration method incorporating two window sizes.

AA has been widely observed to accelerate convergence, convergence analysis, however, has only been reported more recently. In 2015, Toth and Kelley \cite{toth2015convergence} initially demonstrated that the stationary version of AA without damping is locally $r$-linearly convergent if the fixed-point map is a contraction. Subsequently, Evans et al. \cite{evans2020proof} extended this result to AA with damping factors. Pollock et al. \cite{pollock2019anderson} recently applied AA to the Picard iteration for solving steady incompressible Navier–Stokes equations and showed that acceleration improved the convergence rate of the Picard iteration. Sterck and He \cite{sterck2021asymptotic} further extended this result to a more general fixed-point iteration of the form $\mathbf{x}=\mathbf{g}(\mathbf{x})$, given knowledge of the spectrum of $\mathbf{g}'(\mathbf{x})$ at the fixed-point $\mathbf{x}^*$. Additionally, Wang et al. \cite{wang2021asymptotic} investigated the asymptotic linear convergence speed of sAA when applied to the Alternating Direction Method of Multipliers (ADMM) method. Global convergence and sharper local convergence results of AA remain active research topics. For more related results about AA and its applications, we refer interested readers to the papers \cite{sterck2012nonlinear, brune2015composing, toth2017local, peng2018anderson, zhang2019accelerating, zhang2020globally, bian2021anderson, chen2022non,khatiwala2023fast} and the references therein.


Despite the extensive utilization and investigation of AA over several decades, most authors have primarily concentrated on analyzing its classical version applied to Picard iterations. Nonetheless, the potential of preconditioned Anderson acceleration (PAA) and its applicability to other iterative methods have remained largely unexplored, with only a few reported results \cite{pollock2020benchmarking, herbst2020black, li2020fast, ji2023on}. For instance, Herbst and Levitt \cite{herbst2020black} proposed using effective preconditioners $\mathbf{P}$ for solving the nonlinear equations $\mathbf{f(\mathbf{x})} = \mathbf{0}$ through preconditioned fixed-point maps $\mathbf{g(x)} = \mathbf{x} - \mathbf{P} \mathbf{f(\mathbf{x})}$. In the specific case of Newton's method, where $\mathbf{P}$ approximates the inverse of $\mathbf{f'(\mathbf{x^*})}$, Pollock and Schwartz\cite{pollock2020benchmarking} introduced and investigated the Anderson accelerated Newton method, which employs Anderson acceleration to expedite Newton's iterations. More recently, Ji et al.\cite{ji2023on} applied the preconditioned Anderson acceleration framework to a PDE-based elliptic parameterization technique in isogeometric analysis. However, the relationship between preconditioned Anderson acceleration and other iterative methods remains unclear, and the convergence analysis of PAA is currently missing.

This study presents a comprehensive investigation and analysis of a versatile preconditioned Anderson acceleration (PAA) framework. Initially, we demonstrate that PAA can encompass various fixed-point iteration techniques, including Picard, Newton, and quasi-Newton iterations. The approach of PAA can be viewed as employing Anderson acceleration both as its own preconditioner and as an accelerator for quasi-Newton methods when their convergence is insufficient. Subsequently, under suitable assumptions regarding non-singular preconditioning matrices, we establish a convergence result for the proposed PAA method. The flexibility of the PAA framework allows for striking a balance between the convergence rate (linear, super-linear, or quadratic) and the computational cost associated with approximating the Jacobian matrix. This adaptability renders the method suitable for a wide array of problems characterized by different levels of nonlinearity. By incorporating an appropriate preconditioner into the Anderson acceleration method, we achieve superior convergence rates and increased robustness. Lastly, we conduct a series of practical preconditioning strategies on various problems to enhance numerical stability and convergence speed. Through extensive numerical experiments, we demonstrate the effectiveness of the proposed update strategy.

\section{Preconditioned Anderson acceleration}\label{sec:5}

 Recall that the Picard iteration:

\begin{equation}
    \mathbf{x}_{k+1} =\mathbf{g}(\mathbf{x}_k)= \mathbf{x}_k - \mathbf{f}(\mathbf{x}_k)
\end{equation}
 can only converges linearly. However, when Anderson acceleration is used to accelerates this Picard iteration, a super-linear convergence rate could be achieved. Therefore, to further improve the numerical stability and the convergence speed, Anderson acceleration can also be used to accelerate the quasi-Newton method and even the Newton's iteration method. 

 Specifically, we introduce non-singular matrices $\mathbf{M}_k$ as the preconditioners at iteration $k$. With this, we define the following preconditioned fixed-point iteration scheme, which is also known as the quasi-Newton iteration scheme:

\begin{equation}
\label{eq:4603}
    \mathbf{x}_{k+1} = \mathbf{x}_{k}-\mathbf{M}_k^{-1} \mathbf{f}(\mathbf{x}_k).
\end{equation}
Next, we utilize the Anderson acceleration (AA) algorithm to expedite the convergence of this preconditioned iteration (\ref{eq:4603}). This approach can be seen as using AA as a preconditioner for AA itself, or as a means to accelerate quasi-Newton methods in cases where the quasi-Newton approach alone proves insufficient.

Denoting the most recent iterates by $\mathbf{x}_{k-m},\cdots,\mathbf{x}_k\in\mathbb{R}^n$ and the corresponding outputs $\mathbf{w}_{k-m}=-\mathbf{M}_k^{-1}\mathbf{f}(\mathbf{x}_{k-m}),\cdots,\mathbf{w}_k=-\mathbf{M}_k^{-1}\mathbf{f}(\mathbf{x}_k)\in\mathbb{R}^n$. Then the Anderson acceleration takes the latest $m+1$ steps into account:

\begin{equation}\label{kw-paa-1}
    \mathbf{x}_{k+1}^{AA}=\bar{\mathbf{x}}_k+\beta\bar{\mathbf{w}}_k,
\end{equation}
where the damping parameter $\beta\in(0,1]$ is a real number, $\bar{\mathbf{x}}_k$ and $\bar{\mathbf{f}}_k$ denotes the weighted average of the previous iterates and residuals, respectively. Specifically,

\begin{equation}
    \bar{\mathbf{x}}_k=\mathbf{x}_k-\sum_{j=1}^{m}\gamma_j^{k}(\mathbf{x}_{k-m+j}-\mathbf{x}_{k-m+j-1})=\mathbf{x}_k-\mathbf{E}_k\Gamma_{k},
\end{equation}

\begin{equation}
    \bar{\mathbf{w}}_k=-\mathbf{M}_k^{-1}\mathbf{f}_k-\sum_{j=1}^{m}\gamma_j^{k}(\mathbf{w}_{k-m+j}-\mathbf{w}_{k-m+j-1})=\mathbf{w}_k-\mathbf{W}_k\Gamma_{k},
\end{equation}
where $\mathbf{E}_k$ and $\mathbf{W}_k$ are given by

\begin{equation}
\begin{aligned}
    \mathbf{E}_k&=\left[(\mathbf{x}_{k-m+1}-\mathbf{x}_{k-m})\ \ (\mathbf{x}_{k-m+2}-\mathbf{x}_{k-m+1})\ \ \cdots\ \ (\mathbf{x}_k-\mathbf{x}_{k-1})\right],\\
    \mathbf{W}_k&=\left[(\mathbf{w}_{k-m+1}-\mathbf{w}_{k-m})\ \ (\mathbf{w}_{k-m+2}-\mathbf{w}_{k-m+1})\ \ \cdots\ \ (\mathbf{w}_k-\mathbf{w}_{k-1})\right]. 
\end{aligned}    
\end{equation}
The scalars $\Gamma_k=[\gamma_{1},\cdots,\gamma_{m}]^{T}$ are chosen so as to minimize the $l_2$ norm of the residual

\begin{equation}\label{kw-min-gama}
\Gamma_k=\rm{argmin}_{\Gamma\in\mathcal{R}^m}\|\mathbf{w}_k-\mathbf{W}_k\Gamma\|^2_2.
\end{equation}
Then, we obtain

\begin{equation}
\begin{aligned}
    \mathbf{x}_{k+1}^{AA}&=\bar{\mathbf{x}}_k+\beta\bar{\mathbf{w}}_k\\
    &=\mathbf{x}_k+\beta\mathbf{M}_k^{-1}\mathbf{f}(\mathbf{x}_k)-\left(\mathbf{E}_k+\beta\mathbf{W}_k\right)\left[\mathbf{W}_k^{\mathrm{T}}\mathbf{W}_k\right]^{-1}\mathbf{W}_k^{\mathrm{T}}\mathbf{M}_k^{-1}\mathbf{f}(\mathbf{x}_k).
\end{aligned}    
\end{equation}
If we set

\begin{equation}
        \mathbf{C}_k=\frac{1}{\beta}\left[\left(\mathbf{E}_k+\beta\mathbf{W}_k\right)\left[\mathbf{W}_k^{\mathrm{T}}\mathbf{W}_k\right]^{-1}\mathbf{W}_k^{\mathrm{T}}-\beta\mathbf{I}\right],
\end{equation}
the update of the preconditioned Anderson acceleration reads

\begin{equation}
    \mathbf{x}_{k+1}^{AA}=\mathbf{x}_k-\beta\mathbf{C}_k\mathbf{M}_k^{-1}\mathbf{f}(\mathbf{x}_k).
\end{equation}

Next, we give our motivations on how to choose those $\mathbf{M}_k$ and introduce several practical preconditioners.

\subsection{Motivation on choosing suitable preconditioner}
\label{sec:501}
To determine suitable preconditioners for our proposed preconditioned Anderson acceleration method, we first examine the convergence rate of the unaccelerated iteration scheme (\ref{eq:4603}). The convergence results of iteration (\ref{eq:4603}) are summarized in the following theorem.

\begin{theorem}\label{kw-th-1} For the nonlinear system (\ref{eq:4601}), let $\mathbf{x}^*$ be a solution of $\mathbf{f}(\mathbf{x})=\mathbf{0}$ and suppose that $\mathbf{f}(\mathbf{x})$ is continuously differentiable and second-order derivative exists, then the preconditioned fixed-point iteration scheme in (\ref{eq:4603})

\begin{equation*}
    \mathbf{x}_{k+1} = \mathbf{x}_{k}-\mathbf{M}_k^{-1} \mathbf{f}(\mathbf{x}_k)
\end{equation*}
converges if

\begin{equation}
\left\|\mathbf{I} - \mathbf{M}_k^{-1}\bm{\mathcal{J}}_k \right\|<1,
\end{equation}
where $\bm{\mathcal{J}}_k$ is the $n\times n$ Jacobian matrix of $\mathbf{f}(\mathbf{x})$ at iteration $k$. Moreover, if we set the local error $\mathbf{e}_{k}=\mathbf{x}^*-\mathbf{x}_k$, then the relationship between the error in any two consecutive iterates can be shown to be

\begin{equation}
\label{kw-th-1-eq1}
    \mathbf{e}_{k+1} = \left( \mathbf{I} - \mathbf{M}_k^{-1}\bm{\mathcal{J}}_k \right) \mathbf{e}_k - \frac{1}{2} \mathbf{M}_k^{-1} \mathbf{E}_k^{\mathrm{T}}\mathbf{H}_k\mathbf{E}_k + \mathcal{O}(\Vert \mathbf{e}_k \Vert^3).
\end{equation}
Here, $\mathbf{E}_k = diag \{\mathbf{e}_k^{\mathrm{T}}, \ldots, \mathbf{e}_k^{\mathrm{T}}\}$ is a diagonal matrix composed of the transposed residual vector $\mathbf{e}_k^{\mathrm{T}}$, and 

\begin{equation*}
    \mathbf{H}_k = diag \{ \mathbf{H}^{(1)}(\mathbf{x}), \mathbf{H}^{(2)}(\mathbf{x}), \ldots, \mathbf{H}^{(n)}(\mathbf{x}) \}
\end{equation*}
is a block-diagonal matrix composed of $\mathbf{H}^{(i)}(\mathbf{x})$ which is the Hessian matrix of the $i$-th nonlinear equation $\mathbf{f}^{(i)}(x)$ ($i=1,2,\ldots,n$) at iteration $k$.
\end{theorem}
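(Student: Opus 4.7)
The plan is to derive the local error recursion by Taylor-expanding $\mathbf{f}$ around the current iterate $\mathbf{x}_k$ and evaluating at the solution $\mathbf{x}^*$, then use the resulting recursion together with the contraction assumption to conclude convergence. Concretely, I would first write the error as
\begin{equation*}
\mathbf{e}_{k+1} = \mathbf{x}^* - \mathbf{x}_{k+1} = \mathbf{x}^* - \mathbf{x}_k + \mathbf{M}_k^{-1}\mathbf{f}(\mathbf{x}_k) = \mathbf{e}_k + \mathbf{M}_k^{-1}\mathbf{f}(\mathbf{x}_k),
\end{equation*}
so the whole problem reduces to expressing $\mathbf{f}(\mathbf{x}_k)$ in terms of $\mathbf{e}_k$.

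Next, since $\mathbf{f}$ is twice continuously differentiable, I would apply a component-wise second-order Taylor expansion to the identity $\mathbf{f}^{(i)}(\mathbf{x}^*) = 0$ about $\mathbf{x}_k$, for $i=1,\dots,n$, obtaining
\begin{equation*}
0 = \mathbf{f}^{(i)}(\mathbf{x}_k) + \nabla \mathbf{f}^{(i)}(\mathbf{x}_k)^{\mathrm{T}}\mathbf{e}_k + \tfrac{1}{2}\,\mathbf{e}_k^{\mathrm{T}}\mathbf{H}^{(i)}(\mathbf{x}_k)\mathbf{e}_k + \mathcal{O}(\Vert \mathbf{e}_k\Vert^3).
\end{equation*}
Stacking these $n$ scalar equations produces $\mathbf{f}(\mathbf{x}_k) = -\bm{\mathcal{J}}_k\mathbf{e}_k - \tfrac12\,\mathbf{E}_k^{\mathrm{T}}\mathbf{H}_k\mathbf{E}_k + \mathcal{O}(\Vert \mathbf{e}_k\Vert^3)$, where the block-diagonal matrices $\mathbf{E}_k$ and $\mathbf{H}_k$ are exactly those introduced in the statement (they simply package the $n$ quadratic forms $\mathbf{e}_k^{\mathrm{T}}\mathbf{H}^{(i)}\mathbf{e}_k$ into a single vector). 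Substituting this expression into $\mathbf{e}_{k+1} = \mathbf{e}_k + \mathbf{M}_k^{-1}\mathbf{f}(\mathbf{x}_k)$ and collecting terms yields exactly \eqref{kw-th-1-eq1}.

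For the convergence claim, I would take norms in \eqref{kw-th-1-eq1} and use the assumption $\|\mathbf{I} - \mathbf{M}_k^{-1}\bm{\mathcal{J}}_k\| \le \rho < 1$ together with the boundedness of $\mathbf{M}_k^{-1}$ and $\mathbf{H}_k$ in a neighborhood of $\mathbf{x}^*$ to obtain
\begin{equation*}
\Vert \mathbf{e}_{k+1} \Vert \le \rho \Vert \mathbf{e}_k \Vert + C \Vert \mathbf{e}_k \Vert^2
\end{equation*}
for some constant $C>0$. An elementary induction then shows that once $\Vert \mathbf{e}_0 \Vert$ is sufficiently small, the iterates remain in the neighborhood and $\Vert \mathbf{e}_k \Vert \to 0$ at least linearly with effective rate arbitrarily close to $\rho$.

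The main obstacle, I expect, is not the calculus itself but the bookkeeping of the block-diagonal second-order term: one has to verify that the compact object $\mathbf{E}_k^{\mathrm{T}}\mathbf{H}_k\mathbf{E}_k$ in the theorem statement really does coincide, component by component, with the stacked quadratic forms coming out of the Taylor expansion, and that the remainder genuinely is $\mathcal{O}(\Vert\mathbf{e}_k\Vert^3)$ uniformly in $k$ (which in turn needs local boundedness of $\mathbf{f}'''$, or at least Lipschitz continuity of the Hessian blocks). Everything else — the derivation of the recursion and the passage from recursion to local convergence — is a standard fixed-point argument.
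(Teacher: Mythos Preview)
Your proposal is correct and follows essentially the same route as the paper: Taylor-expand $\mathbf{f}$ about $\mathbf{x}_k$, evaluate at $\mathbf{x}^*$, and combine with the iteration identity $\mathbf{e}_{k+1}=\mathbf{e}_k+\mathbf{M}_k^{-1}\mathbf{f}(\mathbf{x}_k)$ (the paper writes this equivalently as $\mathbf{f}(\mathbf{x}_k)=\mathbf{M}_k(\mathbf{e}_{k+1}-\mathbf{e}_k)$ and substitutes into the expansion before multiplying by $\mathbf{M}_k^{-1}$). Your component-wise derivation of the quadratic term and your norm-plus-induction argument for convergence are in fact more explicit than the paper's treatment, which simply states the vector Taylor expansion and reads off the contraction condition without spelling out the local-convergence induction.
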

\begin{proof}
Let us assume that $\mathbf{x}^*$ is the solution to the nonlinear system $\mathbf{f}(\mathbf{x}) = \mathbf{0}$, and $\mathbf{x}_k$ is an estimate for $\mathbf{x}^*$ at the $k$-th iteration, such that $\Vert \mathbf{e}_k \Vert = \Vert \mathbf{x}^* - \mathbf{x}_k \Vert \ll 1$. Based on Taylor's expansion theorem, we have:

\begin{equation}
\label{eq:5101}
\mathbf{0} = \mathbf{f}(\mathbf{x}^*) = \mathbf{f} (\mathbf{x}_k) + \bm{\mathcal{J}}_k \mathbf{e}_k + \frac{1}{2} \mathbf{E}_k^{\mathrm{T}} \mathbf{H}_k \mathbf{E}_k + \mathcal{O}(\Vert \mathbf{e}_k \Vert^3),
\end{equation}
Utilizing the preconditioned fixed-point iteration (\ref{eq:4603}), we obtain:

\begin{equation}
    \label{eq:5102}
    \mathbf{f}({\mathbf{x}_k}) = \mathbf{M}_k \left(\mathbf{x}_{k} - \mathbf{x}_{k+1}\right) = \mathbf{M}_k \left( (\mathbf{x}^* - \mathbf{x}_{k+1}) - (\mathbf{x}^* - \mathbf{x}_{k}) \right) = \mathbf{M}_k \left(\mathbf{e}_{k+1}-\mathbf{e}_{k} \right).
\end{equation}
Substituting the above formula (\ref{eq:5102}) into (\ref{eq:5101}) results in

\begin{equation}
\label{eq:5103}
\begin{aligned}
    \mathbf{0} & = \mathbf{f}(\mathbf{x}_k) + \bm{\mathcal{J}}_k \mathbf{e}_k + \frac{1}{2}\mathbf{E}_k^{\mathrm{T}}\mathbf{H}_k\mathbf{E}_k + \mathcal{O}(\Vert \mathbf{e}_k \Vert^3)\\
    &=\mathbf{M}_k \left(\mathbf{e}_{k+1} - \mathbf{e}_{k}\right) + \bm{\mathcal{J}}_k \mathbf{e}_k + \frac{1}{2}\mathbf{E}_k^{\mathrm{T}}\mathbf{H}_k\mathbf{E}_k + \mathcal{O}(\Vert \mathbf{e}_k \Vert^3).
\end{aligned}
\end{equation}
Multiplying $\mathbf{M}_k^{-1}$ on both sides of equation \eqref{eq:5103}, we have

\begin{equation}
\label{eq:5104}
\begin{aligned}
    \mathbf{0} &= \left(\mathbf{e}_{k+1} -\mathbf{e}_{k}\right) + \mathbf{M}_k^{-1} \bm{\mathcal{J}}_k \mathbf{e}_k+\frac{1}{2} \mathbf{M}_k^{-1} \mathbf{E}_k^{\mathrm{T}}\mathbf{H}_k\mathbf{E}_k + \mathcal{O}(\Vert \mathbf{e}_k \Vert^3)\\
    &= \mathbf{e}_{k+1} - \left( \mathbf{I}-\mathbf{M}_k^{-1} \bm{\mathcal{J}}_k \right)\mathbf{e}_k + \frac{1}{2} \mathbf{M}_k^{-1} \mathbf{E}_k^{\mathrm{T}} \mathbf{H}_k \mathbf{E}_k + \mathcal{O}(\Vert \mathbf{e}_k \Vert^3),
\end{aligned}
\end{equation}
where $\mathbf{I}$ is the $n \times n$ identity matrix.

Finally, we arrive at

\begin{equation*}
    \mathbf{e}_{k+1} = \left( \mathbf{I} - \mathbf{M}_k^{-1}\bm{\mathcal{J}}_k \right) \mathbf{e}_k - \frac{1}{2} \mathbf{M}_k^{-1} \mathbf{E}_k^{\mathrm{T}}\mathbf{H}_k\mathbf{E}_k + \mathcal{O}(\Vert \mathbf{e}_k \Vert^3).
\end{equation*}
In particular, convergence of the preconditioned fixed-point iteration requires

\begin{equation*}
\left\|\mathbf{I} - \mathbf{M}_k^{-1}\bm{\bm{\mathcal{J}}}_k \right\|<1.
\end{equation*}
\end{proof}

\begin{remark} For the linear problem, 

\begin{equation}
\mathbf{f}(\mathbf{x})=\mathbf{Ax-b}=\mathbf{0}, \  \mathbf{x}\in \mathbb{R}^n,
\end{equation}
suppose the matrix $\mathbf{A}$ can be split as 

\begin{equation}
    \mathbf{A=D+R},
\end{equation}
where all the off-diagonal components of the matrix $\mathbf{D}$ and the diagonal components of $\mathbf{R}$ are identically zero. By using this decomposition, the preconditioned fixed-point iteration (\ref{eq:4603}) converges if

\begin{equation}
\left\|\mathbf{I} - \mathbf{D}^{-1}\mathbf{A} \right\|<1,
\end{equation}
which recovers the results from Pratapa and Suryanarayana\cite{pratapa2016anderson}. 
\end{remark}

\begin{remark}

If we set the preconditioner 

\begin{equation}
    \label{eq:5106}
    \mathbf{M}_k = \bm{\mathcal{J}}_k,
\end{equation}
then we obtain the Newton iteration. In this case, (\ref{kw-th-1-eq1}) boils down to 

\begin{equation}
\label{eq:5107}
\begin{aligned}
      \mathbf{e}_{k+1} &= \left( \mathbf{I} - \mathbf{M}_k^{-1}\bm{\mathcal{J}}_k \right) \mathbf{e}_k - \frac{1}{2} \mathbf{M}_k^{-1} \mathbf{E}_k^{\mathrm{T}}\mathbf{H}_k\mathbf{E}_k + \mathcal{O}(\Vert \mathbf{e}_k \Vert^3)\\
       &= \left( \mathbf{I} - \bm{\mathcal{J}}_k^{-1}\bm{\mathcal{J}}_k \right) \mathbf{e}_k - \frac{1}{2} \mathbf{M}_k^{-1} \mathbf{E}_k^{\mathrm{T}}\mathbf{H}_k\mathbf{E}_k + \mathcal{O}(\Vert \mathbf{e}_k \Vert^3)\\
      &= -\frac{1}{2} \bm{\mathcal{J}}_k^{-1} \mathbf{E}_k^{\mathrm{T}}\mathbf{H}_k\mathbf{E}_k + \mathcal{O}(\Vert \mathbf{e}_k \Vert^3)\\
\end{aligned}
\end{equation}
therefore, we have

\begin{equation}
      \|\mathbf{e}_{k+1}\|\propto \Vert \mathbf{e}_k \Vert^2
\end{equation}
which means the Newton's method achieves quadratic convergence as we expected.

This implies that, when commencing with a suitably accurate initial guess and updating the preconditioner $\mathbf{M}_k$ at each iteration, our preconditioned AA scheme (\ref{eq:4603}) can attain a quadratic convergence rate, aligning with the standard Newton iteration. Nevertheless, we encounter a similar problem as faced by Newton-type solvers: the computationally intensive task of frequently updating the full Jacobian, which contradicts our initial intention of employing AA to reduce computational overhead. To address this concern, we propose several more practical preconditioning strategies aimed at effectively solving the involved nonlinear systems.
\end{remark}

Building upon the above discussions, we summarize several typical preconditioning strategies for $\mathbf{M}_k$ in Table \ref{table_2}.

\begin{table}[ht]
\caption{The selection of preconditioners $\mathbf{M}_k$ and its pros and cons} 
\centering 
\begin{tabular}{ |p{6.5cm}|p{9.5cm}|}
\hline\hline 
Preconditioners $\mathbf{M}_k$&Pros and Cons \\ [0.1ex] 
\hline 
\textbf{1. Constant preconditioner}, i.e., $\mathbf{M}_k=\alpha\mathbf{I}$, where $\alpha$ is a constant value. &This preconditioning strategy is straightforward and computationally economical. However, it requires manual tuning of the $\alpha$ value. If $\alpha$ is set to 1, it reverts to the case of no preconditioning. It's worth noting that as per \eqref{kw-th-1-eq1}, if the preconditioners $\mathbf{M}_k$ significantly deviate from the Jacobian, convergence may become limited to linear or even diverge in certain cases.\\
\hline 
\textbf{2. Full Jacobian preconditioner}, i.e.,  $\mathbf{M}_k = \bm{\mathcal{J}}_k$.&Essentially, this preconditioning strategy uses AA to accelerate Newton's method \cite{pollock2020benchmarking}. However, it comes with computational expenses since we need to update this preconditioner at each iteration.\\
\hline 
\textbf{3. Diagonal Jacobian or Block-diagonal Jacobian preconditioner}, i.e., $\mathbf{M}_k = \rm{diag}(\bm{\mathcal{J}}_k)\ or\ \mathbf{M}_k = \rm{diagBlock}(\bm{\mathcal{J}}_k)$.&To strike a balance between a ``good preconditioner" and ``computational overhead", the (block) diagonal Jacobian preconditioner can be a promising candidate due to its relatively inexpensive computation.\\
\hline 
\textbf{4. Other approximations of Jacobian as preconditioners}. &For example, we can use rank one correction and rank two correction of the inverse of Jacobian matrix in L-BFGS \cite{liu1989limited} as a preconditioner. Besides, we could also use AA itself as a preconditioner for AA\cite{chen2022composite}. These preconditioners are computationally cheaper and they can approximate the inverse of Jacobian matrix directly.\\
\hline 
\textbf{5. Updating the preconditioners $\mathbf{M}$ every $N_{update}$ steps instead of updating them each iteration}. &Unlike Newton-type solvers and gradient-based methods, the AA iteration scheme involves a linear combination of several previous iterates. This characteristic allows us to avoid frequent updates of preconditioners, which can be computationally expensive.
\\[0.5ex] 
\hline 
\end{tabular}
\label{table_2}
\end{table}

\subsection{Overview of the proposed algorithm and its relations with other methods}
\label{sec:502}

In this section, we present the details of the proposed algorithm and explore its connections with other methods. The general preconditioned Anderson acceleration scheme, incorporating a dynamic preconditioning strategy, is summarized in the following Algorithm \ref{alg:1}.


\begin{algorithm}
    \caption{Preconditioned Anderson acceleration: \textbf{PAA(m)}}
    \label{alg:1}
    \KwIn{\quad $\mathbf{x}_0$: Initial guess; \\
    \quad \quad \quad \quad $\mathbf{f}(\mathbf{x})$: Nonlinear system; \\
    \quad \quad \quad \quad $m$: Window size used in AA; \\
    \quad \quad \quad \quad  $N_{max}$: Maximum iterations; \\
    \quad \quad \quad \quad  $tol$: Tolerance of residual norm for convergence test; \\
    \quad \quad \quad \quad  $N_{update}$: Update the preconditioner every $N_{update}$ steps.}
    \KwOut{$\mathbf{x}_{k+1}$: Numerical solution\\}
    
    \For{$k=0 \mathrm{;}$ $k \le N_{max} \mathrm{;}$ $++k$}
    {
        Compute residual vector $\mathbf{f}_{k} =\mathbf{f}(\mathbf{x}_k)$; {\color{gray} \hfill }
        
        \vspace{0.2 cm}
        {\color{gray}  \tcp{Check the termination criteria}}
        \If{ $\Vert \mathbf{f}_k \Vert < tol$}{
        break;}
    
        \vspace{0.2 cm}
        {\color{gray}  \tcp{Update preconditioner}}
        
        \If{$k$ is evenly divisible by $N_{update}$}{
             Compute preconditioning matrix $\mathbf{M_k}$; {\color{gray} \hfill \tcp{\Cref{table_2}}}
        }
        
        \vspace{0.2 cm}
        {\color{gray}  \tcp{Anderson acceleration}}
        Compute $m_{k}=\min\{m,k\}$; \\
        
        Compute preconditioned residual vector $\mathcal{F}_k$ by solving $\mathbf{M_k} \mathcal{F}_k = -\mathbf{f}_k$; \\
        
        
        Determine $\bm{\alpha}^{(k)}=\left(\alpha_0^{(k)},\cdots,\alpha_{m_k}^{(k)}\right)^{\mathrm{T}}$ that solves
        \begin{equation*}
        \begin{aligned}
             \mathop{\arg\min} \limits_{\bm{\alpha}=(\alpha_0,\cdots,\alpha_{m_k})^{\mathrm{T}}}\quad & \left \Vert \sum_{i=0}^{m_k} \mathcal{F}_{k-m_k+i} \bm{\alpha} \right \Vert ^2_2 \\
            \text{s.t.}\quad &\sum_{i=0}^{m_k}\alpha_i = 1; 
        \end{aligned}
        \end{equation*}
        

        Update $\displaystyle \mathbf{x}_{k+1}= \sum_{i=0}^{m_k}\alpha_{i}^{(k)} \left( \mathbf{x}_{k-m_k+i} + \mathcal{F}_{k-m_k+i} \right)$;
    }
\end{algorithm}

\begin{remark}The proposed preconditioned Anderson acceleration $(\mathbf{PAA}(m))$ method can be viewed as a nonlinear acceleration method applied to a general iteration scheme:

\begin{equation}
\mathbf{x}_{k+1}=\mathbf{U}(\mathbf{x}_k),
\end{equation}
where $\mathbf{U}(\mathbf{x})$ is an update function. The $\mathbf{PAA}(m)$ method can accelerate various fixed-point iteration methods, including the Picard iteration, the Newton iteration, and the quasi-Newton iteration. Specifically, the $\mathbf{PAA}(m)$ method accelerates the Picard iteration when $\mathbf{M}_k=\mathbf{I}$, i.e., $\mathbf{U}(\mathbf{x}_k)=\mathbf{x}_{k}- \mathbf{f}(\mathbf{x}_k)$ (see Anderson\cite{anderson1965iterative}), accelerates the Newton iteration when $\mathbf{M}_k = \bm{\mathcal{J}}_k$, i.e., $\mathbf{U}(\mathbf{x}_k)=\mathbf{x}_{k}-[\bm{\mathcal{J}}_k]^{-1}\mathbf{f}(\mathbf{x}_k)$ (see Pollock and Schwartz \cite{pollock2020benchmarking}), and accelerates the quasi-Newton iteration when the preconditioner matrix $\mathbf{M}_k$ lies between these two matrices and $\mathbf{U}(\mathbf{x}_k)=\mathbf{x}_{k}- \mathbf{M}_k^{-1}\mathbf{f}(\mathbf{x}_k)$. Moreover, the $\mathbf{PAA}(m)$ method can recover all the above unaccelerated iteration methods by setting the window size $m=0$ and using different $\mathbf{M}_k$ in $\mathbf{PAA}(m)$.

The key advantage of the $\mathbf{PAA}(m)$ method lies in its ability to easily control and manipulate the trade-off between convergence rate (linear, superlinear, or quadratic) and the cost of computing or approximating the Jacobian matrix (identity matrix, diagonal Jacobian matrix, full Jacobian matrix, or other approximation Jacobian matrix). This control can be achieved by tuning the window size $m$, the preconditioners $\mathbf{M}_k$, and the frequency of updating these preconditioners $\mathbf{N}_{update}$ in $\mathbf{PAA}(m)$.
\end{remark}

\begin{remark}
    Other aspects to further improve the performance of AA: Using damping factor or dynamical damping factors $\beta$, see Evans et al.\cite{evans2020proof} and Chen and Vuik\cite{chen2022non}; Using filtering for Anderson acceleration, see Haelterman et al.\cite{haelterman2016improving} and Pollock and Rebholz\cite{pollock2022filtering}; Using multiple window sizes, see Pollock and Rebholz\cite{pollock2021anderson}.
\end{remark}

\section{Numerical experiments}
\label{sec:6}


\subsection{Benchmark results}
The problems considered here are the systems of nonlinear equations from the standard test set in literature. Throughout this section, each method was terminated on the residual reduced below the tolerance $\|\mathbf{f}(\mathbf{x_k})\|<10^{-10}$. All these experiments are done in the MATLAB 2021b environment.

\begin{example}\label{p1} \rm{[\textbf{2D nonlinear polynomial systems\cite{kelley1983new}}]} Apply preconditioned AA to solve the following $\mathbf{f}(\mathbf{x})=\mathbf{0}$ with

\begin{equation}\label{kw_example_1}
\left \{
    \begin{aligned}
        f_1(x_1,x_2)&=(x_1-1)+(x_2-3)^2,\\
        f_2(x_1,x_2)&=\epsilon(x_2-3)+\frac{3}{2}(x_1-1)(x_2-3)+(x_2-3)^2+(x_2-3)^3.
    \end{aligned}
\right.
\end{equation}
\end{example}
In this simple example, we know from Kelley and Suresh\cite{kelley1983new} that there are two distinct roots  $\mathbf{x}_+=(1,3)^{\mathrm{T}}$ and $\mathbf{x}_-=(1-\eta^2,3+\eta)^{\mathrm{T}}$ with $\eta=1-\sqrt{1+2\epsilon}$ for $\epsilon>0$. When $\epsilon=0$, the two roots collapse into the same solution $\mathbf{x}=(1,3)^{\mathrm{T}}$. We apply preconditioned AA with different preconditioning strategies to solve this problem for both $\epsilon=0$ and $\epsilon=10^{-6}$. For each method, we use $5$ random initial guesses $(x_0,y_0)\in[-1,3]\times[1,5]$ in our numerical experiments.

From Fig.~\ref{fig-example-1}, we observe that the original Anderson acceleration method $AA(1)$, without preconditioning, exhibits poor performance and may even diverge for certain initial guesses in both cases. A simple constant preconditioning strategy, $\mathbf{M}_k = \alpha\mathbf{I}$ with $\alpha=0.1$, improves the convergence rate slightly, but it remains slow. On the other hand, when better preconditioners, such as the diagonal Jacobian and the full Jacobian preconditioning strategies, are employed, the performance is significantly improved. However, it is essential to consider the computational costs of these preconditioners when comparing their performance for solving large systems of nonlinear equations.

\begin{figure}
    \centering
    \subfigure[$\epsilon=0$]{
     \includegraphics[width=.5\linewidth]{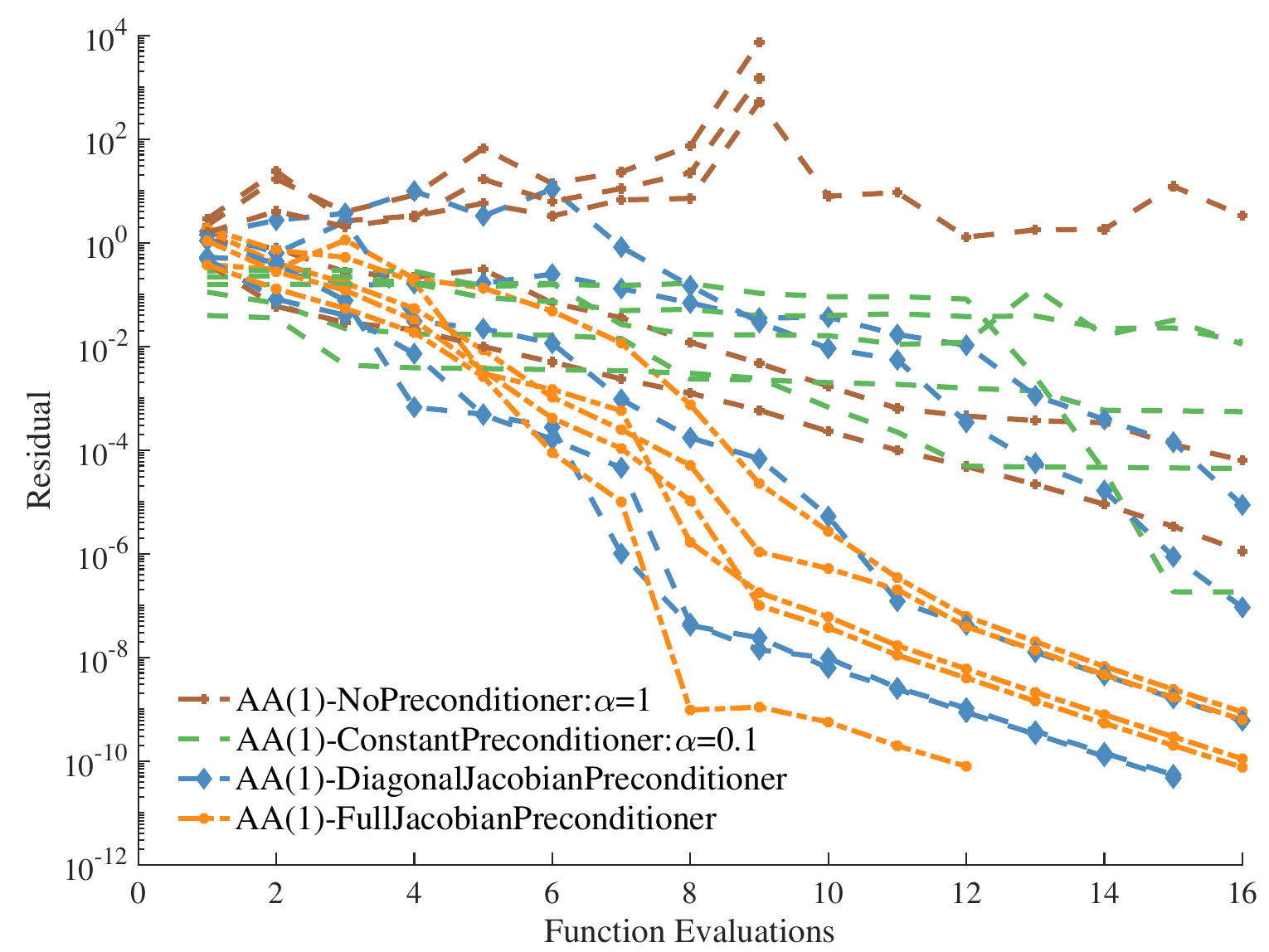}
    \label{fig:example1_5}}
    \quad
    \subfigure[$\epsilon=10^{-6}$]{
     \includegraphics[width=.5\linewidth]{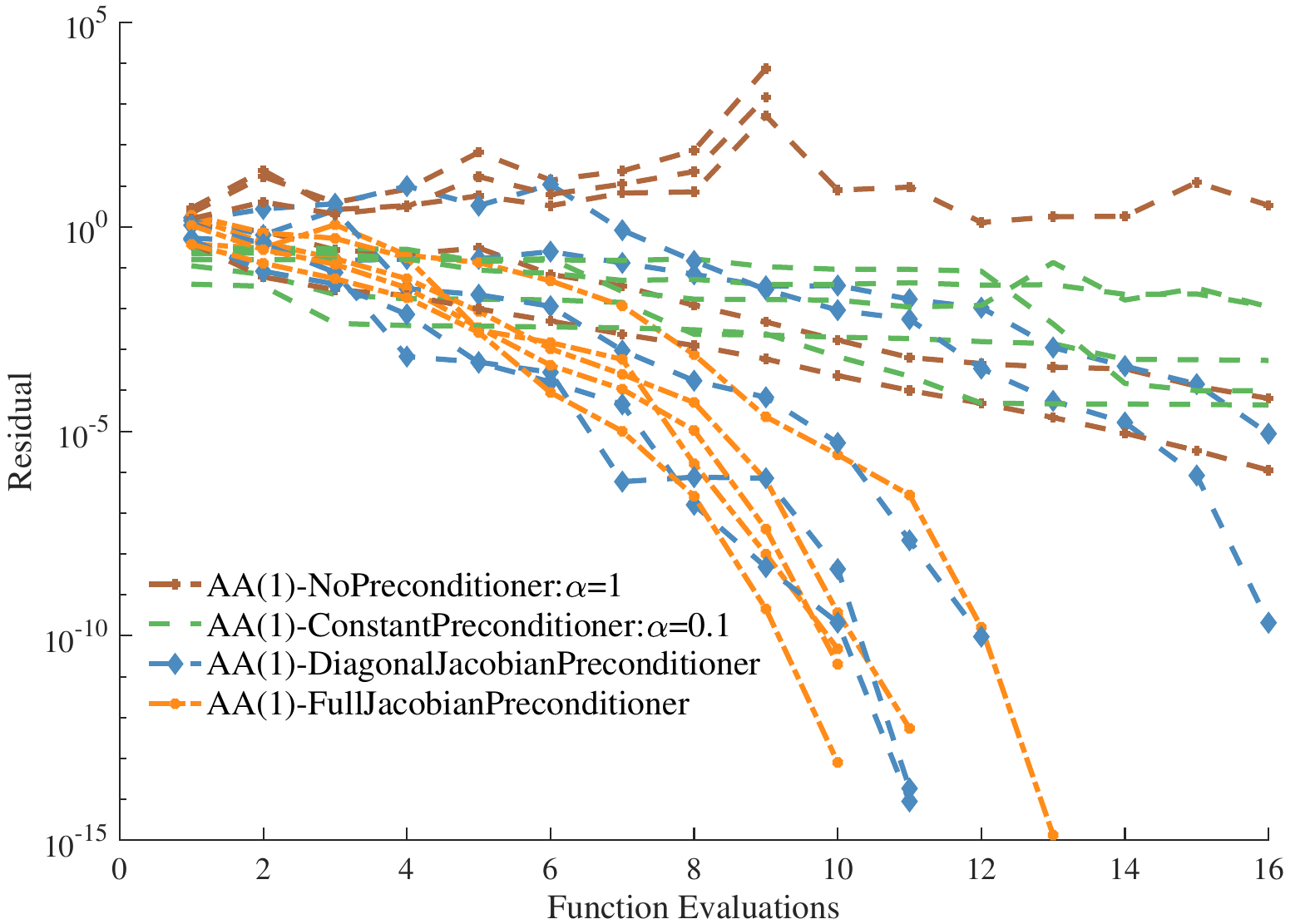}
    \label{fig:exampe1_50}}
    \caption{Using Preconditioned AA to solve a 2D nonlinear polynomial system with 5 random initial guesses.}\label{fig-example-1}
\end{figure}

\begin{example}\label{p2}  \rm{[\textbf{Trigonometric nonlinear equations, n=5,50,500}}\cite{more1981testing}] Apply preconditioned AA to solve the following larger nonlinear trigonometric systems. Let

\begin{equation}h_i(x)=n-\sum_{j=1}^{n}\cos(x_j)+i[1-\cos (x_i)]-\sin (x_i),\ i=1,2,\cdots,n,\end{equation}
we create a manufacturing fixed-point $\mathbf{x}^*=(\frac{\pi}{4},\cdots,\frac{\pi}{4})^{\mathrm{T}}$ of the following system

\begin{equation}n-\sum_{j=1}^{n}\cos(x_j)+i(1-\cos (x_i))-\sin (x_i)=h_i(\frac{\pi}{4}),\ i=1,2,\cdots,n.\end{equation}
Then, we define the nonlinear equations as:

\begin{equation}\label{kw_examle_2}f_i(x)=\left(h_i(x)-h_i(\frac{\pi}{4})\right),\ i=1,2,\cdots,n.\end{equation}
with real solution $\mathbf{x}^*=(\frac{\pi}{4},\cdots,\frac{\pi}{4})^{\mathrm{T}}$.
\end{example}

In this numerical test, we adopt an example from Mor\'e et al.\cite{more1981testing} such that a manufacturing solution is created. The initial guesses $\mathbf{x}_0\in \mathbb{R}^n$ used here are five random vectors close to $\mathbf{x}^*$ such that $\mathbf{x}_0\in(\pi/4-0.05,\pi/4+0.05)\times\cdots\times(\pi/4-0.05,\pi/4+0.05)$. We apply preconditioned AA with different preconditioning strategies to solve equations \eqref{kw_examle_2} for $n=5, 50, 500$. As the dimension $n$ increases, we use larger window sizes for Anderson acceleration. The results of our experiments are illustrated in Fig.~\ref{fig-example-2}.

When $n=5$, we see from Fig.~\ref{fig:example2_5} that the straightforward constant preconditioning approach using $\mathbf{M}_k=\alpha\mathbf{I}$ with $\alpha=0.1$ and $\alpha=0.01$, yields poorer results than the original $AA(3)$ method. However, the diagonal Jacobian preconditioning and the full Jacobian strategies performs much better than original $AA(3)$ as we expected. This indicates that the simple constant preconditioner may not always works and a good preconditioner is very important in solving nonlinear equations.

When $n=50$ and $n=500$, as we can see from Fig.~\ref{fig:exampe2_50} and Fig.~\ref{fig:example2_500} that the original Anderson acceleration without preconditioning does not converge. The simple constant preconditioning strategy performs slightly better but still diverges or converges very slowly. Again, if we choose better preconditioners, the diagonal Jacobian preconditioning and the full Jacobian strategies converge very fast. 

However, as $n$ grows larger, the cost of computing a full Jacobian preconditioner appears to be progressively increasing, see Table.~\ref{table:t1}. For example, when $n=500$, the Anderson acceleration with diagonal Jacobian preconditioner only needs around 0.09 seconds to find the solution while the full Jacobian preconditioning strategy (Anderson accelerates Newton's iteration) needs almost around 16 seconds to achieve the same accuracy. Moreover, to further improve the time efficiency of preconditioned AA, we can also update our preconditioners every $N_{update}$ steps instead calculating it at each iteration. For instance, when employing Anderson acceleration with the full Jacobian preconditioning strategy, setting the number of steps between preconditioner updates to $N_{update}=2$ leads to superior performance in comparison to computing preconditioners at every iteration, as demonstrated by the results presented in Table~\ref{table:t1}. 
 

\begin{figure}
    \centering
    \subfigure[n=5]{
    \includegraphics[width=0.45\linewidth]{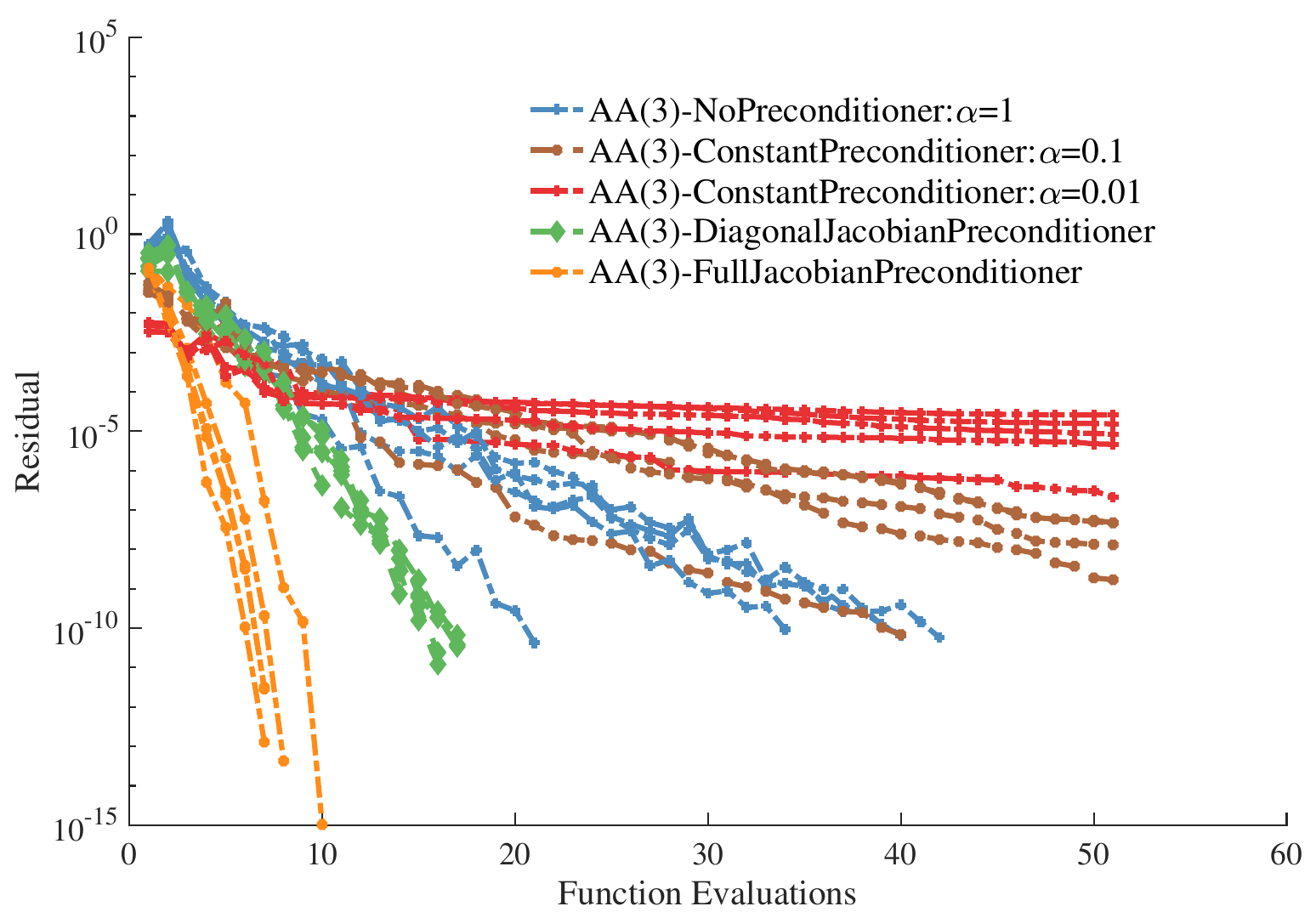}
    \label{fig:example2_5}}
    \quad
    \subfigure[n=50]{
    \includegraphics[width=0.45\linewidth]{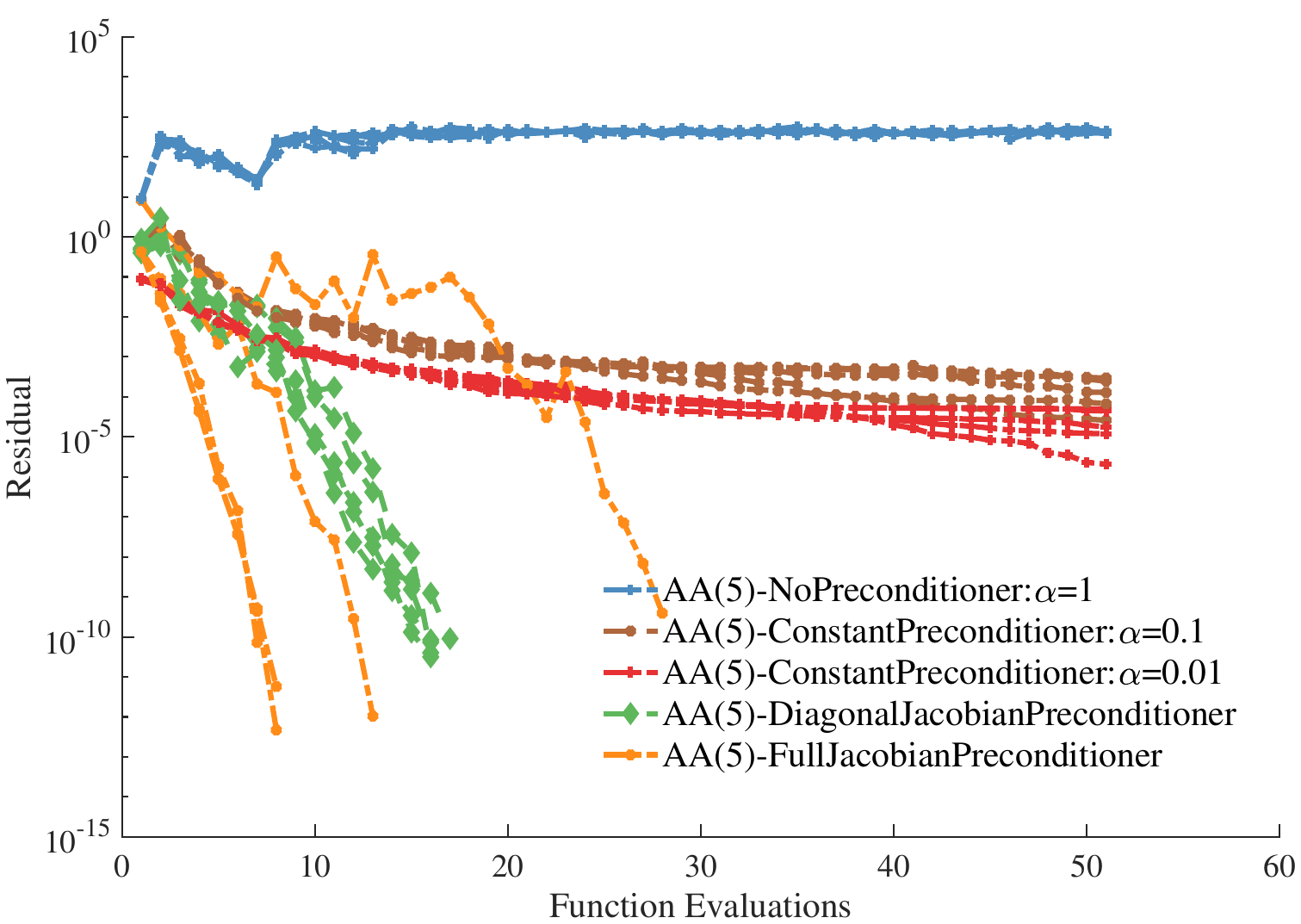}
    \label{fig:exampe2_50}}
    \quad
    \subfigure[n=500]{
    \includegraphics[width=0.45\linewidth]{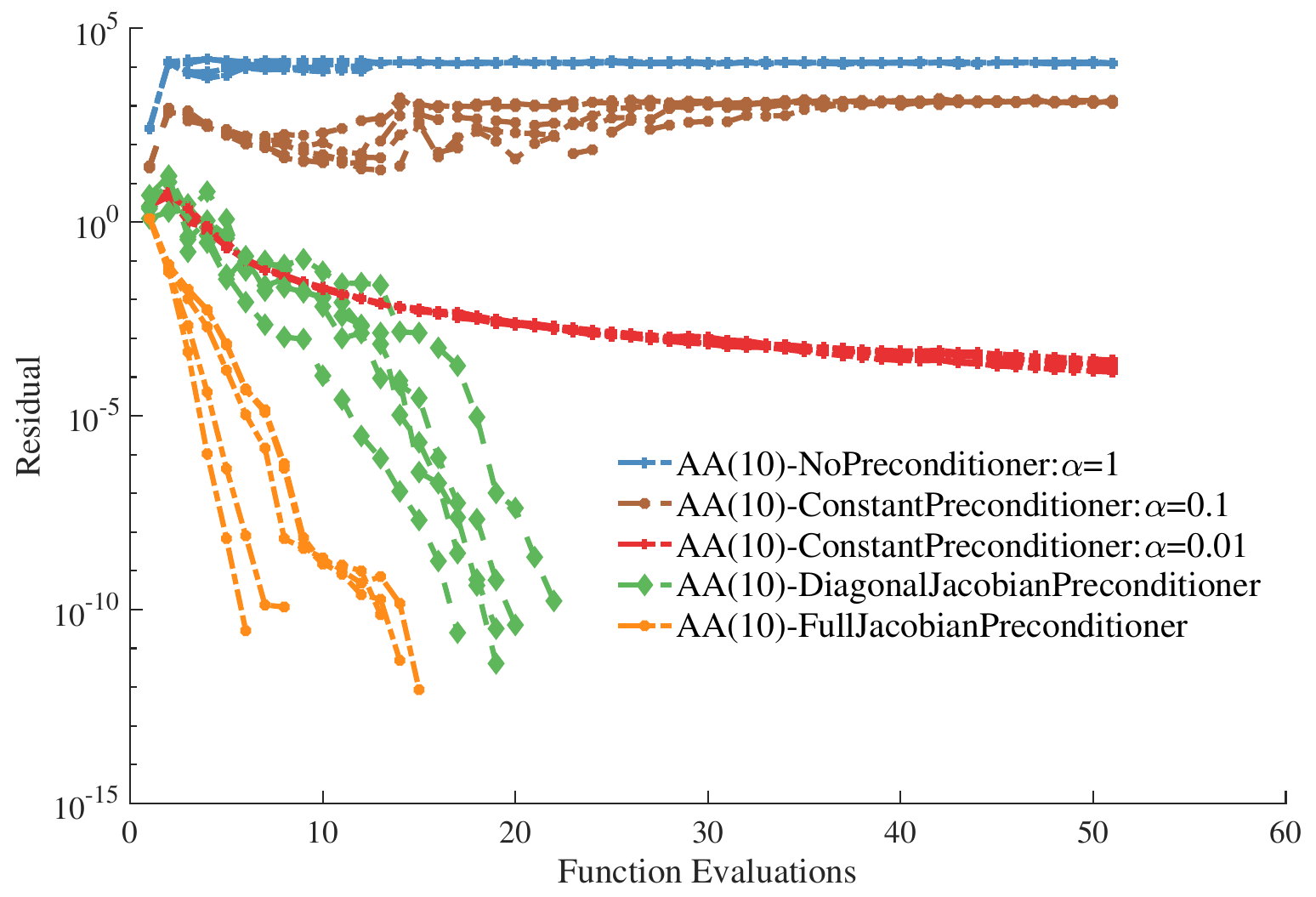}
    \label{fig:example2_500}}
    \caption{Using preconditioned AA to solve trigonometric nonlinear equations.}\label{fig-example-2}
\end{figure}

\begin{table}[ht]
\caption{Total average time (seconds) used for different preconditioned AA methods.} 
\centering 
\begin{tabular}{c c c c} 
\toprule 
Methods&\ \ $n=5$ &\ \ $n=50$ &\ \  $n=500$  \\ [0.5ex] 
\hline 
AA & 0.0040 &- & -\\
DiagJacobian preconditioner& 0.0030 & \textbf{0.0040} & \textbf{0.0909} \\
FullJacobian preconditioner& 0.0038 & 0.0411 &  15.8453 \\
FullJacobian preconditioner, $N_{update}=2$& \textbf{0.0021} & 0.0322 &  11.7176 \\[0.5ex] 
\bottomrule 
\end{tabular}
\label{table:t1} 
\end{table}

\begin{example}\rm{[\textbf{The\ nonlinear Bratu\ problem}\cite{walker2011anderson}]} The Bratu problem is a nonlinear PDE boundary value problem as follows:

\begin{equation*}
\begin{aligned}
    \Delta u +\lambda\ e^{u} &= 0,\ \ in\ \ \Omega=[0,1]\times[0,1], \\
    u &= 0,\ \ on \ \ \partial \Omega. 
\end{aligned}
\end{equation*}

\end{example}

This example is from Walker\cite{walker2011anderson},  where a stationary Anderson acceleration with window size $m=50$ is used to solve the Bratu problem. This problem has a long history, we refer the reader to Glowinski et al.\cite{glowinski1985continuation} and the references in those papers. 

In this experiment, we employe a centered-difference discretization approach to approximate the solution of the Bratu problem on two different grid sizes: $32 \times 32$ and $64 \times 64$. The value of parameter $\lambda$ in the Bratu problem was set to 6, and the initial guess for the solution was chosen as $u_0=(1,1,\cdots,1)^{T}$ for all cases. It was observed that the Picard iteration diverged when no preconditioning was applied. We also note here that the preconditioning matrix of the linear part refers to the matrix $A$, where $A$ is a matrix for the discrete Laplace operator. The numerical results are shown in Fig.~\ref{bratu}.

From Fig.~\ref{bratu}, we observe that Anderson acceleration, utilizing a diagonal Jacobian preconditioner (either for only the linear component or for the complete nonlinear system), notably expedites the Picard iteration. Although the variant utilizing the full Jacobian achieves even faster convergence, computing the complete Jacobian incurs significant computational expense. It necessitates more time to achieve comparable accuracy compared to the diagonal preconditioning strategies. Consequently, deriving a "reasonably good and affordable" approximation of the complete Jacobian is crucial when solving nonlinear systems with high dimensions. The preconditioners utilized in preconditioned Anderson acceleration (PAA) must be adjusted accordingly to obtain optimal acceleration.

\begin{figure}
    \centering
    \subfigure[32 $\times$ 32]{
     \includegraphics[width=.55\linewidth]{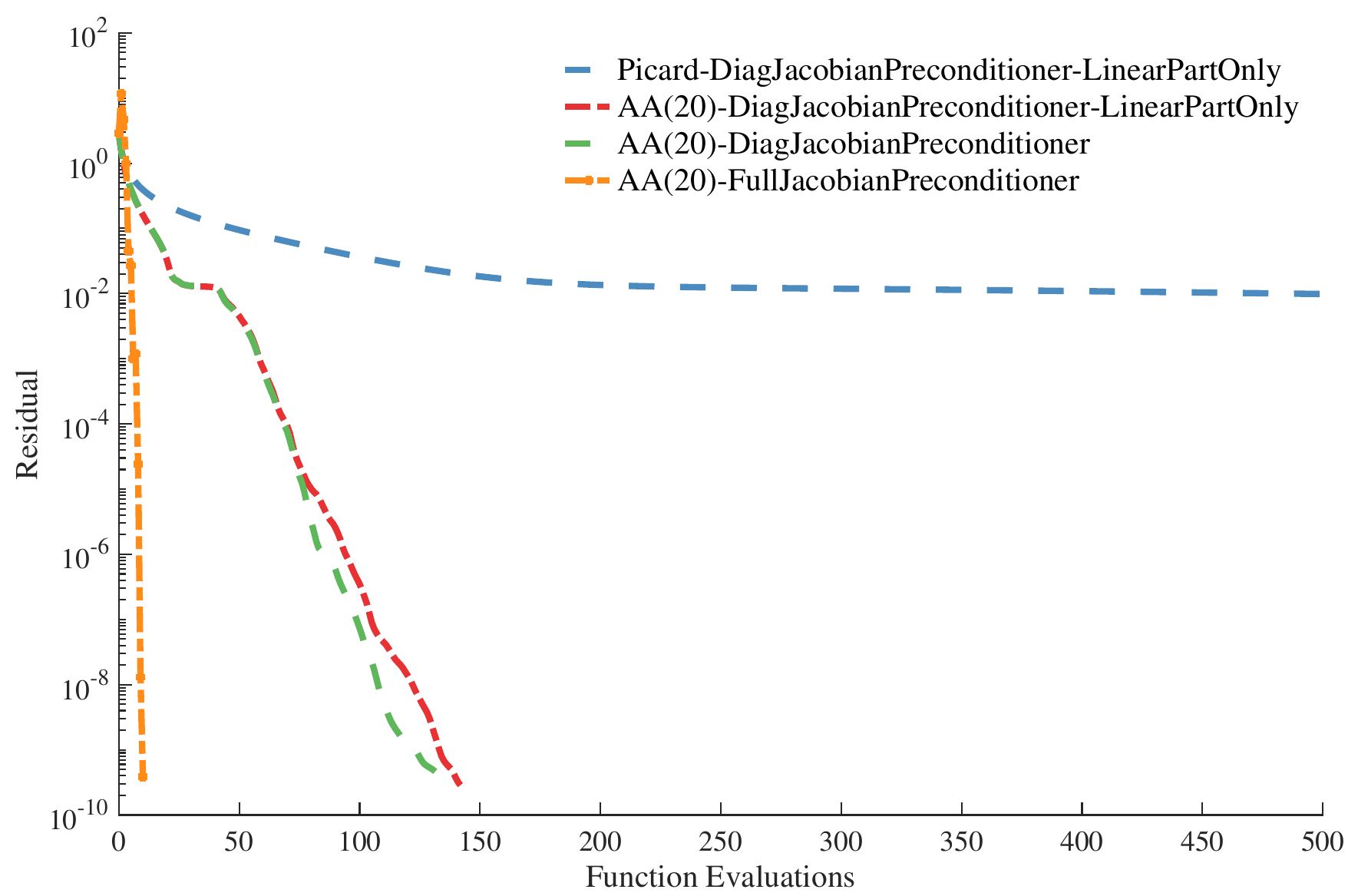}
    \label{fig:example3_5}}
    \quad
    \subfigure[64 $\times$ 64]{
     \includegraphics[width=.55\linewidth]{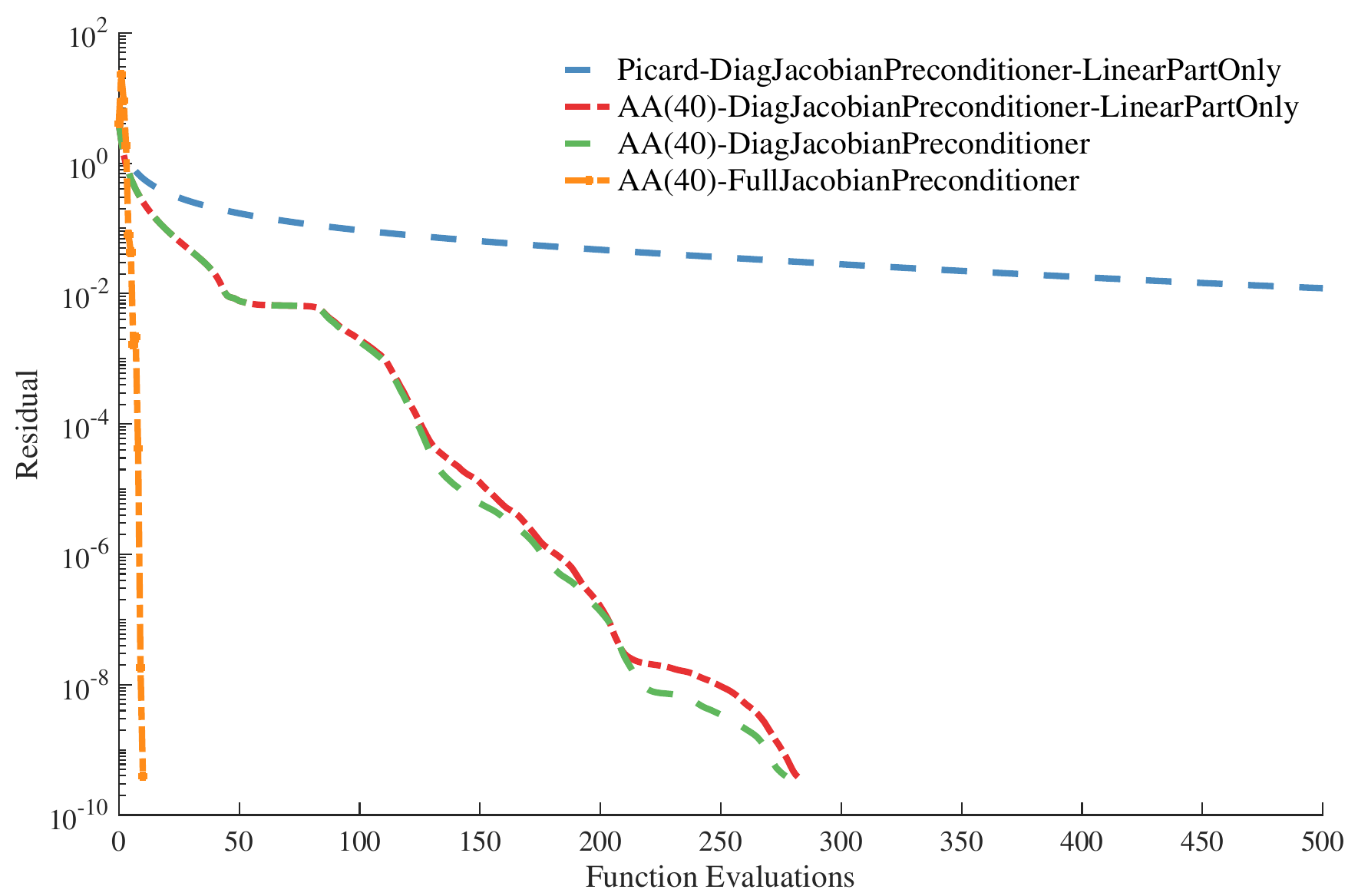}
    \label{fig:exampe3_50}}
    \caption{Using preconditioned AA to solve the nonlinear Bratu problem.}\label{bratu}
\end{figure}

\begin{example}\rm{[\textbf{The\ stationary\ nonlinear\ convection-diffusion\ problem}\cite{chen2022composite}]} Solve the following 2D nonlinear convection-diffusion equation in a square region:

\begin{equation}\epsilon(-u_{xx}-u_{yy})+(u_x+u_y)+ku^2=f(x,y),\ \  (x,y)\in \Omega =[0,1]\times[0,1] \end{equation}
with the source term 

\begin{equation}f(x,y)=2\pi^2\sin(\pi x)\sin(\pi y)\end{equation}
and zero boundary conditions: $u(x,y)=0$ on $\partial \Omega$.
\end{example}

In this numerical experiment, we use a centered-difference discretization on a $32\times 32$ grid. We choose $\epsilon=0.1$ and $\epsilon=0.01$, respectively. Those $\epsilon$ values indicate the competition between the diffusion and convection effect. We take $k=3$ in the above problem and use $u_0=(1,1,\cdots,1)^{T}$ as an initial approximate solution in all cases. Again, the preconditioning matrix of the linear part refers to the matrix $A$, where $A$ is a matrix for the discrete Laplace operator.

The results are shown in Fig.~\ref{diffusion}, which are consistent with that of solving the Bratu problem. Particularly, when $\epsilon=0.01$, the linear part becomes less important and the nonlinear term dominates. Fig.\ref{fig:exampe4_50} illustrates that solely utilizing the linear portion of the diagonal Jacobian in a simple Picard iteration leads to divergence. However, our preconditioned Anderson acceleration method, utilizing the same preconditioner, achieves convergence. Furthermore, implementing a superior preconditioner (i.e., the diagonal Jacobian of the complete nonlinear equations) results in significantly faster convergence, as anticipated.

\begin{figure}
    \centering
    \subfigure[$\epsilon=0.1$]{
     \includegraphics[width=.55\linewidth]{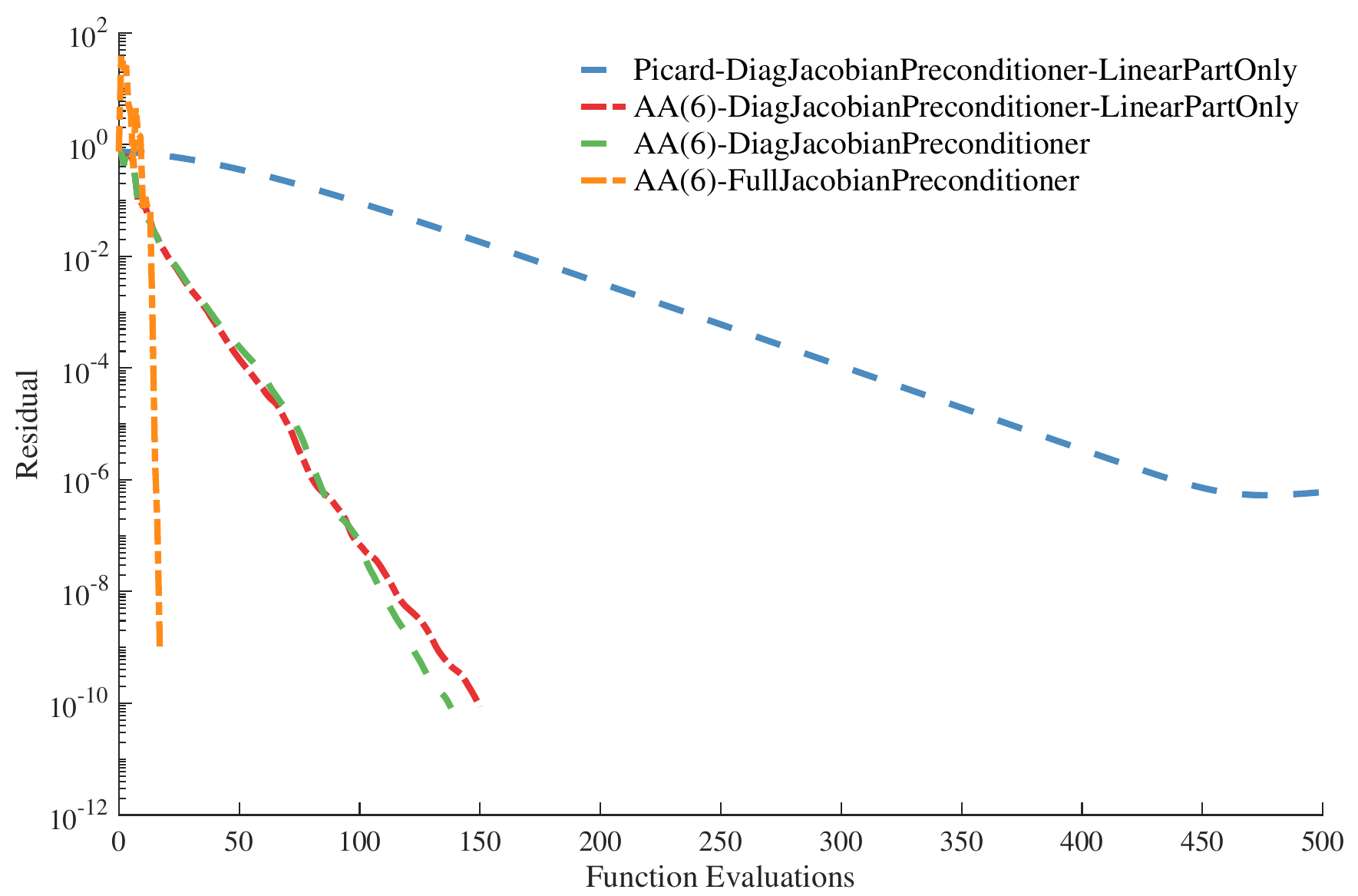}
    \label{fig:example4_5}}
    \quad
    \subfigure[$\epsilon=0.01$]{
     \includegraphics[width=.55\linewidth]{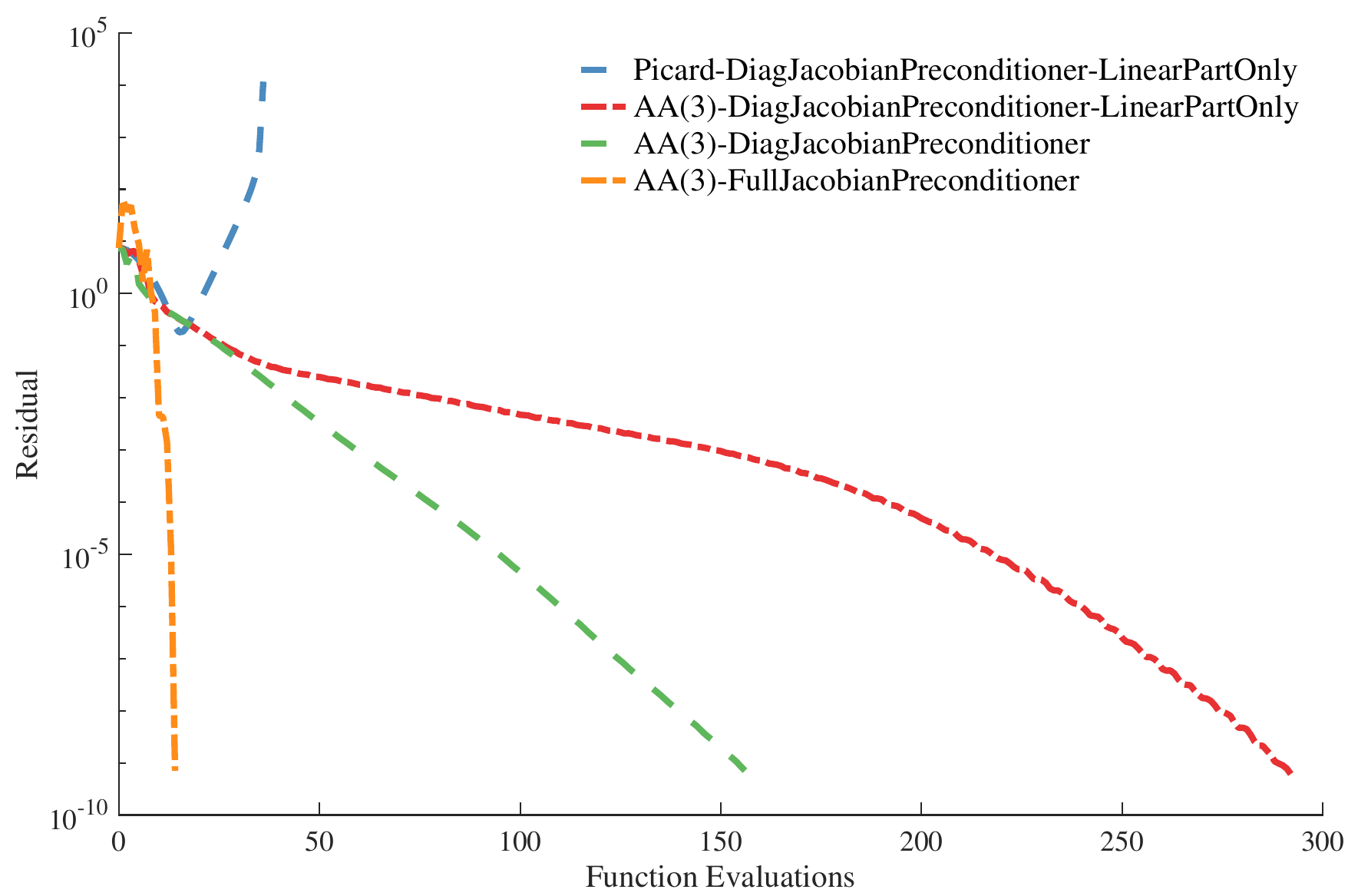}
    \label{fig:exampe4_50}}
    \caption{Using preconditioned AA to solve the stationary nonlinear convection-diffusion problem.}\label{diffusion}
\end{figure}

\section{Conclusions and outlook}
\label{sec:conclusions}
Our current study introduces a preconditioned Anderson acceleration scheme. This method provides the flexibility to adjust the balance between convergence rate (linear, super-linear, or quadratic) and the cost of computing or approximating the Jacobian matrix. It can recover different fixed-point iteration techniques such as the Picard, Newton, and quasi-Newton iterations. The preconditioned Anderson acceleration method is versatile and can be adapted to a wide range of problems with varying levels of nonlinearity. By incorporating an appropriate preconditioner, improved convergence rates and enhanced robustness can be achieved. We test several preconditioning strategies, including a constant preconditioner, diagonal Jacobian or block-diagonal Jacobian preconditioner, and full Jacobian preconditioner. Additionally, we discuss a delayed update strategy of the preconditioner, where it is updated every few steps to reduce the computational cost per iteration. 


Overall, our proposed framework is an effective tool for solving nonlinear problems, enabling users to tailor the convergence behavior to their specific needs while maintaining low computational costs.








\bibliography{wileyNJD-AMA}

\end{document}